\def\hom#1{\Hom_{#1}}
\def\sir#1{\si_*(#1)}
\begin{document}
\title{Quiver representations in abelian categories}
\author{Sergey Mozgovoy}

\address{School of Mathematics, Trinity College Dublin, Ireland\newline\indent
Hamilton Mathematics Institute, Ireland}

\email{mozgovoy@maths.tcd.ie}

\begin{abstract}
We introduce the notion of (twisted) quiver representations in abelian categories and study the category of such representations.
We construct standard resolutions and coresolutions of quiver representations and study basic homological properties of the category of representations. 
These results are applied in the case of parabolic vector bundles and framed coherent sheaves.
\end{abstract}

\maketitle

\section{Introduction}
Let $\Phi:\cA\to\cB$ be an additive functor between abelian categories.
Its mapping cylinder $\cC_\Phi$
is the category with objects consisting of triples $(A,B,\phi)$, where $$A\in\Ob(\cA),\qquad B\in\Ob(\cB),\qquad \phi\in\Hom_\cB(B,\Phi A).$$
This category is abelian if \Phi is left exact.
For example, let $\cA=\Coh Z$ be the category of coherent sheaves on an algebraic variety~$Z$ over a field \bk,
$\cB=\Vect$ be the category of vector spaces over \bk
and $\Phi=\Ga(Z,-)$ be the section functor which is left exact.
An object of the mapping cylinder is a triple $(E,V,\vi)$, where 
$$E\in\Coh Z,\qquad V\in\Vect,\qquad \vi:V\to\Ga(Z,E).$$
The category $\fCoh Z=\cC_\Phi$
is the category of framed coherent sheaves over $Z$~\cite{mozgovoy_wall-crossing}.
One can show \cite{mozgovoy_intersection} that if $\Phi:\cA\to\Vect$ is exact and \cA is hereditary,
then $\cC_\Phi$ is also hereditary.

The goal of this note is to develop some basic homological algebra of mapping cylinders and, more generally, of categories of (twisted) quiver representations in abelian categories.
Let $Q=(Q_0,Q_1,s,t)$ be a quiver and $\Phi$ be a $Q^\op$-diagram of abelian categories (see \S\ref{sec:quivers})
consisting of abelian categories $\Phi_i$ for all $i\in Q_0$ and left exact functors $a^*=\Phi_a:\Phi_j\to\Phi_i$ for all arrows $a:i\to j$ in $Q$.
Define the category $\dRep(\Phi)$ of (twisted) $Q$-representations to have objects $X$ consisting of the data: $X_i\in\Ob(\Phi_i)$ for all $i\in Q_0$ and $X_a:X_i\to a^*X_j$ for all arrows $a:i\to j$ in $Q$.
We will see in Theorem \ref{th:abelian} that $\dRep(\Phi)$ is an  abelian category.

A different way of thinking about $\dRep(\Phi)$ is to consider \Phi as a functor $\cP^\op\to\Cat$, where~$\cP$ is the path category of $Q$ (see \S\ref{sec:quivers}) and $\Cat$ is the category of small categories.
It induces a fibered category $\cF\to\cP$.
Then the category $\dRep(\Phi)$ can be identified with the  category of sections $\bGa\rbr{\cF/\cP}$ (see Remark \ref{fibered cat}).

\begin{notes}
We can consider the category $\cA=\prod_i\cA_i$ and induced functors $\Phi_a:\cA\to\cA_j\to\cA_i\to\cA$.
\end{notes}

Apart from the usual $Q$-representations in the category of vector spaces, an important example of $Q$-representations was studied by Gothen and King~\cite{gothen_homological}, where all categories $\Phi_i$ are equal to the category $\Sh(Z,\cO_Z)$ of $\cO_Z$-modules over a ringed space $(Z,\cO_Z)$ and the functors $\Phi_a$ are tensor products with locally free sheaves.
This covers a lot of interesting situations, like Higgs bundles, Bradlow pairs and chains of vector bundles (see e.g.\ \cite{garcia-prada_motives}),
although not the category of framed objects introduced above or the category of parabolic bundles.
The case of a one loop quiver and an auto-equivalence was studied in \cite{mellit_poincarea}.

In this paper we describe homological properties of the category $\dRep(\Phi)$ in the spirit of~\cite{gothen_homological}.
In Theorems \ref{standard} and \ref{th:costand} we construct standard resolutions and coresolutions in $\dRep(\Phi)$ which are analogues of standard resolutions for usual quiver representations.
In Theorem~\ref{th:injectives} we prove that $\dRep(\Phi)$ has enough injective objects.
The main results of the paper are

\begin{theorem}
Let $\Phi$ be a $Q^\op$-diagram of Grothendieck categories
such that the functor $\Phi_a:\Phi_j\to\Phi_i$ has an exact left adjoint functor $\Psi_a:\Phi_i\to\Phi_j$ for every arrow $a:i\to j$ in $Q$.
Then, for any $X,Y\in\dRep(\Phi)$,
there is a long exact sequence
\begin{multline*}
0\to\Hom_{\dRep(\Phi)}(X,Y)\to\bop_{i}\Hom_{\Phi_i}(X_i,Y_i)
\to\bop_{a:i\to j}\Hom_{\Phi_j}(\Psi_aX_i,Y_j)\\
\to\Ext^1_{\dRep(\Phi)}(X,Y)
\to\bop_{i}\Ext^1_{\Phi_i}(X_i,Y_i)
\to\bop_{a:i\to j}\Ext^1_{\Phi_j}(\Psi_aX_i,Y_j)\to\dots
\end{multline*}
\end{theorem}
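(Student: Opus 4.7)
The plan is to apply $\Hom_{\dRep(\Phi)}(-,Y)$ to the standard resolution of $X$ furnished by Theorem~\ref{standard}. For each vertex $i\in Q_0$ the evaluation functor $\si_i^*\colon\dRep(\Phi)\to\Phi_i$ is exact (its kernels and cokernels are computed componentwise by Theorem~\ref{th:abelian}) and admits a left adjoint $\si_{i,!}\colon\Phi_i\to\dRep(\Phi)$, which in this Grothendieck-categorical setting can be described as a coproduct of compositions $\Psi_{a_n}\cdots\Psi_{a_1}$ along the paths starting at $i$. Under the hypothesis that each $\Psi_a$ is exact, the functor $\si_{i,!}$ is therefore exact as well, since coproducts are exact in a Grothendieck category. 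I expect the standard resolution of $X$ to take the form
\[
0\to\bop_{a\colon i\to j}\si_{j,!}(\Psi_aX_i)\to\bop_i\si_{i,!}(X_i)\to X\to 0.
\]

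The whole theorem reduces to the comparison
\[
\Ext^n_{\dRep(\Phi)}\bigl(\si_{i,!}(A),Y\bigr)\iso\Ext^n_{\Phi_i}(A,Y_i),\qquad A\in\Phi_i,\ n\ge 0.
\]
To establish it, fix an injective resolution $Y\to I^\bullet$ in $\dRep(\Phi)$, available by Theorem~\ref{th:injectives}. Since the left adjoint $\si_{i,!}$ is exact, the right adjoint $\si_i^*$ preserves injectives, so $\si_i^* I^\bullet$ is an injective resolution of $Y_i$ in $\Phi_i$. The adjunction then gives an isomorphism of complexes $\Hom_{\dRep(\Phi)}(\si_{i,!}(A),I^\bullet)\iso\Hom_{\Phi_i}(A,\si_i^* I^\bullet)$, and passing to cohomology yields the comparison.

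Finally, applying $\Hom_{\dRep(\Phi)}(-,Y)$ to the standard resolution produces a long exact sequence of Ext groups; substituting the comparison formula at each appearance of $\si_{i,!}(X_i)$ and $\si_{j,!}(\Psi_aX_i)$ rewrites it in the form stated in the theorem (which in fact continues to all $n\ge 0$, not only up to $\Ext^1$). The main obstacle I anticipate is the verification that $\si_{i,!}$ is genuinely exact: right exactness is automatic from the adjunction, but left exactness truly uses both the hypothesis that each $\Psi_a$ is exact and the fact that arbitrary coproducts are exact in a Grothendieck category, with additional care required when $Q$ has cycles so that the path coproduct defining $\si_{i,!}$ is infinite. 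Once this step is secured, the remainder of the argument is a formal manipulation of the long exact sequence and the adjunction.
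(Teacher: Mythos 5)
Your proposal is correct and follows essentially the same route as the paper: apply $\Hom_{\dRep(\Phi)}(-,Y)$ to the standard resolution of Theorem~\ref{standard}, and reduce to the comparison $\Ext^k_{\dRep(\Phi)}(\si_!A,Y)\iso\Ext^k_{\Phi_i}(A,Y_i)$, which the paper likewise derives from the exactness of both $\si_!$ (using exactness of the $\Psi_a$ and of coproducts in a Grothendieck category) and $\si^*$, via the preservation of injectives by the right adjoint. The paper packages this last step as Remark~\ref{rem:both exact 1}, but the argument is identical to yours.
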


\begin{theorem}
Let $Q$ be an acyclic quiver and $\Phi$ be a $Q^\op$-diagram of Grothendieck categories such that the functor $\Phi_a:\Phi_j\to\Phi_i$ is exact for every arrow $a:i\to j$ in $Q$.
Then, for any $X,Y\in\dRep(\Phi)$,
there is a long exact sequence
\begin{multline*}
0\to\Hom_{\dRep(\Phi)}(X,Y)\to\bop_{i}\Hom_{\Phi_i}(X_i,Y_i)
\to\bop_{a:i\to j}\Hom_{\Phi_i}(X_i,\Phi_aY_j)\\
\to\Ext^1_{\dRep(\Phi)}(X,Y)
\to\bop_{i}\Ext^1_{\Phi_i}(X_i,Y_i)
\to\bop_{a:i\to j}\Ext^1_{\Phi_i}(X_i,\Phi_aY_j)\to\dots
\end{multline*}
\end{theorem}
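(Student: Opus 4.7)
The plan is to mirror the previous theorem by dualising: instead of applying $\Hom(-,Y)$ to a standard resolution of $X$ built from the left adjoints $\Psi_a$, I will apply $\Hom(X,-)$ to the standard coresolution of $Y$ produced by Theorem~\ref{th:costand}. The left adjoints $\Psi_a$ are not assumed to exist, but acyclicity of $Q$ together with exactness of each $\Phi_a$ makes a direct ``cofree'' construction possible and provides the role they would otherwise play.

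First, exactness of every $\Phi_a$ makes both kernels and cokernels in $\dRep(\Phi)$ componentwise, so each evaluation functor $\si_i^*\col\dRep(\Phi)\to\Phi_i$, $X\mapsto X_i$, is exact. Using acyclicity of $Q$, I would construct an explicit right adjoint $\si_{i*}\col\Phi_i\to\dRep(\Phi)$ by
\[
(\si_{i*}W)_j=\prod_{p\col j\to i}\Phi_p W,
\]
where the product runs over paths from $j$ to $i$ and the structure maps are induced by the tautological projections. This gives the adjunction $\Hom_{\dRep(\Phi)}(X,\si_{i*}W)\iso\Hom_{\Phi_i}(X_i,W)$, and, since $\si_i^*$ is exact, $\si_{i*}$ carries injectives in $\Phi_i$ to injectives in $\dRep(\Phi)$. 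Theorem~\ref{th:costand} then supplies a short exact coresolution
\[
0\to Y\to\bop_i\si_{i*}(Y_i)\to\bop_{a\col i\to j}\si_{i*}(\Phi_aY_j)\to 0,
\]
whose two non-$Y$ terms are of the form $\si_{i*}W$ for $W\in\Phi_i$.

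The long exact $\Ext$-sequence of this short exact sequence, combined with the derived adjunction identification $\Ext^n_{\dRep(\Phi)}(X,\si_{i*}W)\iso\Ext^n_{\Phi_i}(X_i,W)$ for all $n\ge 0$ and $W\in\Phi_i$, matches term by term the long exact sequence in the statement, so this last identification is where the main work lies. Preservation of injectives alone is insufficient; I also need to verify that $\si_{i*}$ is exact, so that an injective resolution $W\to I^\bullet$ in $\Phi_i$ remains a resolution after $\si_{i*}$. Using the explicit product formula this reduces to exactness of each composite $\Phi_p$ (automatic, as a composition of exact $\Phi_a$) together with exactness of products over the acyclic collection of paths in the Grothendieck category $\Phi_j$, which can be handled, if necessary, by filtering $Q$ along a topological ordering of its vertices and realising $\si_{i*}$ as an iterated composition of mapping-cylinder-type right adjoints. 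Once this exactness is in hand, the remaining assembly of the long exact sequence is routine bookkeeping.
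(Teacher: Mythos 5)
Your proposal follows the paper's proof essentially verbatim: apply $\Hom(X,-)$ to the standard coresolution from Theorem \ref{th:costand}, then identify $\Ext^n_{\dRep(\Phi)}(X,\si_* W)\iso\Ext^n_{\Phi_i}(X_i,W)$ by showing both $\si^*$ (forgetful, hence exact) and $\si_*$ are exact. The only point you overcomplicate is the exactness of $\si_*$: acyclicity of $Q$ makes the path category finite, so the products in the formula for $\si_*$ are finite and therefore exact in any abelian category — no topological ordering or iterated mapping-cylinder argument is needed.
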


Note that $\Ext^k_{\Phi_i}(X_i,\Phi_aY_j)\not\iso\Ext^k_{\Phi_j}(\Psi_aX_i,Y_j)$ in general,
although these groups are isomorphic if both $\Phi_a$ and $\Psi_a$ are exact (see Remark \ref{rem:both exact}).
Also note that even if we start with a quiver diagram of small abelian categories (like the category $\Coh Z$ for an algebraic variety~$Z$) which can not be Grothendieck as they don't have small colimits, we can consider their \Ind-categories which are Grothendieck (in the case of $\Coh Z$ we obtain the category $\Qcoh Z$ of quasi-coherent sheaves) and extend our diagram to a diagram of Grothendieck categories (see \S\ref{sec:results}).
Then we can apply the above results.

\medskip
The paper is organized as follows.
In \S\ref{sec:diag reps} we introduce diagrams of categories and define their categories of representations.
We prove that the category of representations is abelian under appropriate assumptions and we show that the forgetful functor sending a representation to one of its components has left and right adjoint functors.
In \S\ref{sec:quivers} we restrict our attention to quiver diagrams and we construct standard resolutions and coresolutions of representations.
In \S\ref{sec:results} we prove our main results about long exact sequences for quiver representations. 
In \S\ref{sec:app} we discuss several applications of these results, in particular, in the case of parabolic vector bundles and framed coherent sheaves.

\section{Diagram representations}
\label{sec:diag reps}
Let $\Cat$ be the category of small categories.
Given a small category \cI,
define an \cI-diagram (or just a diagram) of categories to be a 
functor 
$\Psi:\cI\to\Cat$.
For every object $i\in\Ob(\cI)$ let $\Psi_i=\Psi(i)$ and 
for every morphism
$a\in \cI(i,j)=\Hom_\cI(i,j)$ let 
$a_*=\Psi_a=\Psi(a):\Psi_i\to \Psi_j$.
We will assume that the categories $\Psi_i$ are abelian and the functors $\Psi_a$ are additive unless otherwise stated.
We will say that a diagram \Psi is exact (resp.\ left exact, right exact) if the functors $\Psi_a:\Psi_i\to\Psi_j$ are exact (resp.\ left exact, right exact) for all $a\in\cI(i,j)$.

\begin{definition}
Given a diagram $\Psi:\cI\to\Cat$,
define the category $\Rep(\Psi)$ of 
representations (also called a projective $2$-limit of $\Psi$ in a different context \cite{dyckerhoff_highera})
to have objects $X$ consisting of the data:
\begin{enumerate}
\item An object $X_i\in \Psi_i$ for all $i\in \Ob(\cI)$.
\item A morphism $X_a:a_*(X_i)\to X_j$ in $\Psi_i$ for every 
morphism $a:i\to j$ in $\cI$.
\end{enumerate}
subject to the condition:
\begin{enumerate}
\item[(3)] Given morphisms $a:i\to j$, $b:j\to k$ in \cI, we 
have $X_{ba}=X_b\circ b_*(X_a)$.
\end{enumerate}
A morphism $f:X\to Y$ between two objects in $\Rep(\Psi)$ is a tuple 
$f=(f_i)_{i\in \Ob(\cI)}$ of morphisms $f_i:X_i\to Y_i$ in 
$\Psi_i$ such that for every $a\in\cI(i,j)$, the following 
diagram commutes
$$\begin{tikzcd}
a_*(X_i)\dar[swap]{a_*(f_i)}\rar{X_a}& X_j\dar{f_j}\\
a_*(Y_i)\rar{Y_a}&Y_j
\end{tikzcd}$$
\end{definition}


\begin{example}
Let $\cI$ be the category with
$\Ob(\cI)=\set{1,2}$ and
$$\cI(1,1)=\set{\Id_1},\quad \cI(2,2)=\set{\Id_2},\quad
\cI(1,2)=\set{a},\quad \cI(2,1)=\es.$$
Given an algebraic variety $Z$, let 
$$\Psi_1=\Vect,\qquad \Psi_2=\Coh Z,\qquad \Psi_a:\Vect\to\Coh Z,\ V\mto V\ts\cO_Z.$$
Then the category $\Rep(\Psi)$ consists of triples $(V,F,s)$
with $V\in\Vect$, $F\in\Coh Z$ and $s:V\ts\cO_Z\to F$.
Note that $\Psi_a$ has a right adjoint functor $\Phi_a:\Coh Z\to\Vect$, $F\mto\Ga(Z,F)$.
We can
consider $s$ as a morphism $V\to\Phi_a(F)=\Ga(Z,F)$ in \Vect.
\end{example}

\begin{definition}
Given a diagram $\Phi:\cI^\op\to\Cat$, define the category
$\dRep(\Phi)$ to have objects~$X$ consisting of the data:
\begin{enumerate}
\item An object $X_i\in \Phi_i$ for all $i\in \Ob(\cI)$.
\item A morphism $X_a:X_i\to a^*X_j$ in $\Phi_i$ for every 
morphism $a:i\to j$ in $\cI$, where $a^*=\Phi_a$.
\end{enumerate}
subject to the condition:
\begin{enumerate}
\item[(3)] Given morphisms $a:i\to j$, $b:j\to k$ in \cI, we 
have $X_{ba}=a^*(X_b)\circ X_a$.
\end{enumerate}
\end{definition}

\begin{definition}
We say that a diagram $\Psi:\cI\to\Cat$ has a right adjoint diagram $\Phi:\cI^\op\to\Cat$ if $\Phi_i=\Psi_i$ for all $i\in\Ob(\cI)$ and $\Psi_a:\Psi_i\to\Psi_j$ is right adjoint
to $\Phi_a:\Phi_j\to\Phi_i$ for all $a\in\cI(i,j)$.
\end{definition}

\begin{remark}
\label{rem:prod and coprod}
Let $\Psi:\cI\to\Cat$ have a right adjoint diagram $\Phi:\cI^\op\to\Cat$.
Then $\Rep(\Psi)$ is equivalent to $\dRep(\Phi)$.
Moreover, the diagram $\Psi$ is right exact, the diagram~$\Phi$ is left exact,
$\Psi_a:\Psi_i\to\Psi_j$ preserves coproducts and $\Phi_a:\Phi_j\to\Phi_a$ preserves products for all $a\in\cI(i,j)$.
\end{remark}

\begin{remark}
Given a diagram $\Psi:\cI\to\Cat$,
define the opposite diagram $\Psi^\op:\cI\to\Cat$
with $\Psi^\op_i$ being the opposite category of $\Psi_i$
and with the opposite functors $\Psi_a^\op:\Psi_i^\op\to\Psi_j^\op$ for $a\in\cI(i,j)$.
There is a canonical equivalence of categories
\eq{\dRep(\Psi^\op)\iso\Rep(\Psi)^\op.
\label{duality}}
This duality implies that the statements about the category $\dRep(\Psi^\op)$ can be translated to the statements about the category $\Rep(\Psi)$ and vice versa.
\end{remark}

\begin{remark}
\label{fibered cat}
Given a diagram $\Phi:\cI^\op\to\Cat$, 
the category $\dRep(\Phi)$ can be interpreted as the category of sections of the associated fibered category
$p:\cF\to\cI$.
More precisely, the category $\cF=\int\Phi$, called the Grothendieck construction for $\Phi$ \cite[\S VI.8]{SGA1}, has objects
$$\Ob(\cF)=\coprod_{i\in\Ob(\cI)}\Ob(\Phi_i)$$
and, for $X_i\in\Ob(\Phi_i)$ and $X_j\in\Ob(\Phi_j)$, morphisms
$$\Hom_\cF(X_i,X_j)=\coprod_{a\in\Hom_\cI(i,j)}\Hom_{\Phi_i}(X_i,a^*X_j).$$
The functor $p:\cF\to\cI$ is defined by
$$\Ob(\Phi_i)\ni X_i\mto i\in\Ob(\cI),\qquad \Hom_{\Phi_i}(X_i,a^*X_j)\ni X_a\mto a\in \Hom_\cI(i,j).$$
Define the category of sections $\bGa(\cF/\cI)=\bHom_\cI(\cI,\cF)$
whose objects are functors $X:\cI\to\cF$ such that $pX=\Id_\cI$ and morphisms from $X:\cI\to\cF$ to $Y:\cI\to\cF$ are morphisms of functors $f:X\to Y$
such that $pf:pX\to pY$ is an identity morphism of functors.
The last condition means that the component $f_i\in\Hom_\cF(X_i,Y_i)$ of $f$ is contained in $\Hom_{\Phi_i}(X_i,Y_i)$ for every $i\in\Ob(\cI)$.
One can show that the category $\bGa(\cF/\cI)$ is equivalent to the category of representations $\dRep(\Phi)$.
\end{remark}

\begin{theorem}\label{th:abelian}
Let $\Phi:\cI^\op\to\Cat$ be a left exact diagram.
Then the category of representations $\dRep(\Phi)$ is 
abelian.
A sequence of representations $X\to Y\to Z$ is exact in $\dRep(\Phi)$ 
if and only if the sequence
$X_i\to Y_i\to Z_i$
is exact in $\Phi_i$
for all $i\in \Ob(\cI)$.
\end{theorem}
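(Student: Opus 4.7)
The plan is to build every piece of the abelian structure on $\dRep(\Phi)$ componentwise, using left exactness of each $a^*=\Phi_a$ to transport kernels through the structure maps, and observing that cokernels require no additional hypothesis. The zero object is $0_i=0$ with trivial structure maps; a biproduct is $(X\oplus Y)_i=X_i\oplus Y_i$ with the structure maps induced by additivity of each $a^*$; and $\Hom_{\dRep(\Phi)}(X,Y)$ sits inside $\prod_i\Hom_{\Phi_i}(X_i,Y_i)$ as the subgroup of tuples compatible with the structure maps. This makes $\dRep(\Phi)$ additive.

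For a morphism $f\colon X\to Y$, take $k_i\colon K_i\to X_i$ to be $\ker(f_i)$ in $\Phi_i$. Left exactness of $a^*$ makes $a^*k_j\colon a^*K_j\to a^*X_j$ the kernel of $a^*f_j$. The commutativity square for $f$ gives $(a^*f_j)\circ X_a\circ k_i=Y_a\circ f_i\circ k_i=0$, so $X_a\circ k_i$ factors uniquely as $(a^*k_j)\circ K_a$ for a unique $K_a\colon K_i\to a^*K_j$. The cocycle condition $K_{ba}=a^*(K_b)\circ K_a$ then follows by uniqueness, using that $(ba)^*k_k=a^*b^*k_k$ is still a monomorphism. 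Dually, let $q_i\colon Y_i\to C_i$ be $\coker(f_i)$ in $\Phi_i$. No right exactness is needed: the identity $(a^*q_j)\circ Y_a\circ f_i=a^*(q_j\circ f_j)\circ X_a=0$ shows that $(a^*q_j)\circ Y_a$ factors uniquely through $q_i$, yielding $C_a\colon C_i\to a^*C_j$, and the cocycle condition follows since each $q_i$ is epi. A short check using universal properties confirms that these data give a kernel and a cokernel of $f$ in $\dRep(\Phi)$.

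Since kernels and cokernels of $f$ are computed componentwise, so are its image and coimage, and in each abelian category $\Phi_i$ the canonical map $\operatorname{coim}(f_i)\to\im(f_i)$ is an isomorphism. A morphism in $\dRep(\Phi)$ all of whose components are isomorphisms is itself an isomorphism, since the componentwise inverse automatically commutes with the structure maps (a direct calculation from the commutativity square for $f$). Hence $\operatorname{coim}f\to\im f$ is an isomorphism and $\dRep(\Phi)$ is abelian. The final statement on exactness is then immediate: $\im(X\to Y)=\ker(Y\to Z)$ is a componentwise condition. The main obstacle throughout is purely bookkeeping — namely verifying condition~(3) for each structure map produced by a universal property — which in every case reduces to the uniqueness clause in that universal property.
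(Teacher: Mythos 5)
Your proposal is correct and follows essentially the same route as the paper: kernels and cokernels are built componentwise, with left exactness of each $a^*$ used only to identify $a^*(\ker f_j)$ with $\ker(a^*f_j)$ when transporting the kernel through the structure maps, and no exactness needed for cokernels. You spell out a few routine points the paper leaves implicit (the additive structure, the cocycle condition via monomorphy/epimorphy, and the coimage--image comparison), but the substance is identical.
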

\begin{proof}
We just have to show that there exist kernels and cokernels 
in $\cR=\dRep(\Phi)$ which are
defined componentwise.
Let $f:X\to Y$ be a morphism in $\cR$.
We construct the kernel of $f$ as follows.
For every $i\in \Ob(\cI)$, let $h_i:K_i\to X_i$ be the kernel 
of 
$f_i:X_i\to Y_i$.
For every arrow $a:i\to j$, consider the diagram
$$\begin{tikzcd}
K_i\dar[dashed,"K_a"]\rar["h_i"]&
X_i\rar["f_i"]\dar["X_a"]&
Y_i\dar["Y_a"]
\\
a^*K_j\rar["a^*(h_j)"]&
a^*X_j\rar["a^*(f_j)"]&
a^*Y_j
\end{tikzcd}$$
Morphism $a^*(h_j)$ is the kernel of $a^*(f_j)$ as 
$a^*$ is left exact
by our assumption.
The composition $a^*(f_j)X_ah_i$ is zero, hence there 
exists a unique dashed
arrow $K_a$ making the left square commutative.
In this way we obtain an object $K\in\cR$ and a morphism 
$h:K\to X$ such that
$fh=0$.
Let us show that $h$ is the kernel of $f$.
Let $g:Z\to X$ be a morphism in \cR such that $fg=0$.
Then every $g_i:Z_i\to X_i$ can be uniquely factored as
$$\begin{tikzcd}
Z_i\rar["s_i"]\ar[rr,bend left,"g_i"]&K_i\rar["h_i"]&X_i.
\end{tikzcd}$$
To see that $s=(s_i)_{i\in \Ob(\cI)}$ defines a morphism 
$s:Z\to 
K$ such that $g=hs$,
we have to verify commutativity of the left square in the 
diagram
$$\begin{tikzcd}
Z_i
\rar["s_i"]\dar["Z_a"]&
K_i\dar["K_a"]\rar["h_i"]&
X_i\dar["X_a"]
\\
a^*Z_j
\rar["a^*(s_j)"]&
a^*K_j\rar["a^*(h_j)"]&
a^*X_j
\end{tikzcd}$$
This commutativity follows from the fact that
$$a^*(h_j)K_as_i=X_ah_is_i
=a^*(h_j)a^*(s_j)Z_a$$
and that $a^*(h_j)$ is a monomorphism (as $a^*$ is 
left exact).
This proves that $h:K\to X$ is the kernel of $f:X\to Y$.

Construction of the cokernel is similar, with the only 
difference that one does
not require any exactness properties of $a^*$.
\end{proof}

Applying duality \eqref{duality} we obtain

\begin{corollary}
Let $\Psi:\cI\to\Cat$ be a right exact diagram.
Then the category of representations $\Rep(\Psi)$ is abelian.
\end{corollary}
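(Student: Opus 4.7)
The plan is to deduce the result immediately from Theorem \ref{th:abelian} via the duality equivalence \eqref{duality}, namely $\dRep(\Psi^\op)\iso\Rep(\Psi)^\op$.

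First I would observe that if $\Psi:\cI\to\Cat$ is right exact, then the opposite diagram $\Psi^\op$ is left exact. Indeed, for every $a\in\cI(i,j)$ the functor $(\Psi^\op)_a=(\Psi_a)^\op$ on the opposite categories is left exact precisely because $\Psi_a$ is right exact: passage to opposite categories swaps kernels with cokernels, hence left with right exactness.

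Theorem \ref{th:abelian} therefore applies to $\Psi^\op$ and yields that $\dRep(\Psi^\op)$ is abelian. Transporting this through the equivalence \eqref{duality}, we conclude that $\Rep(\Psi)^\op$ is abelian. Since the axioms for an abelian category are self-dual, the opposite of an abelian category is abelian, and hence $\Rep(\Psi)$ itself is abelian.

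There is essentially no obstacle here: all of the genuine homological work — namely, the componentwise construction of kernels and cokernels and the verification of their universal properties — has already been carried out in the proof of Theorem \ref{th:abelian}. The only step requiring any attention is the interchange of left and right exactness under passage to opposite categories, together with the bookkeeping implicit in \eqref{duality} that matches a right exact covariant diagram on $\cI$ with a left exact contravariant diagram on $\cI^\op$; both are immediate from the definitions.
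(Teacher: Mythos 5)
Your argument is exactly the paper's: the corollary is stated there as an immediate consequence of Theorem \ref{th:abelian} obtained by applying the duality \eqref{duality}, and your proposal simply spells out the (correct) bookkeeping that right exactness of $\Psi$ gives left exactness of $\Psi^\op$ and that the opposite of an abelian category is abelian. No issues.
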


\begin{remark}
In order to provide a motivation of the
construction in the next theorem, let us consider the case of quiver representations in vector spaces.
Let $A=\bk Q$ be the path algebra of a quiver $Q$ over a field $\bk$ \cite{crawley-boevey_lectures}.
One can identify $Q$-representations over~\bk with $A$-modules.
For any vertex $i\in Q_0$, there is an idempotent $e_i\in A$ corresponding to the trivial path at $i$.
The projective $A$-module $Ae_i$ has a basis consisting of paths that start at~$i$.
Given a vector space $V$ and a $Q$-representation $X$, the $Q$-representation $Ae_i\ts V$ satisfies
$$\hom A(Ae_i\ts V,X)\iso \hom\bk(V,X_i),$$
where $X_i=e_iX\in\Vect\bk$.
This implies that the forgetful functor $\Mod A\to\Vect\bk$, $X\mto X_i$ has a left adjoint $V\mto Ae_i\ts V$.
\end{remark}

\begin{theorem}
\label{th:adj}
Let $\Psi:\cI\to\Cat$ be a diagram such that the categories $\Psi_i$ have coproducts and the 
functors $\Psi_a:\Psi_i\to\Psi_j$ preserve them for all 
$a:i\to j$ in \cI.
Then, for every $i\in\cI$, the forgetful functor
$$\si^{*}:\Rep(\Psi)\to\Psi_i,\qquad X\mto X_i$$
has a left adjoint functor
$\si_!:\Psi_i\to\Rep(\Psi)$.
\end{theorem}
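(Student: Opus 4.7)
The plan is to mimic, at the diagrammatic level, the classical construction $V \mapsto Ae_i \otimes V$ for path algebras recalled in the remark above. Concretely, for $M \in \Psi_i$, I would define
$$(\si_!M)_j = \bop_{p \in \cI(i,j)} p_*(M), \qquad j \in \Ob(\cI),$$
which exists in $\Psi_j$ by the coproduct hypothesis. For an arrow $a: j \to k$ in $\cI$, the assumption that $a_*$ preserves coproducts yields a canonical identification
$$a_*\left( \bop_{p \in \cI(i,j)} p_*(M) \right) \iso \bop_{p \in \cI(i,j)} (ap)_*(M),$$
and I would take the structure map $(\si_!M)_a$ to send the $p$-summand on the left identically into the $(ap)$-summand on the right. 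Associativity of composition in $\cI$ immediately gives the cocycle identity, so $\si_!M \in \Rep(\Psi)$.

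Next I would construct the unit $\eta_M : M \to (\si_!M)_i$ as the coproduct inclusion of the summand indexed by $\Id_i$, noting that $(\Id_i)_* = \Id_{\Psi_i}$ since $\Psi$ is a functor. The adjunction bijection should send $f \in \Hom_{\Rep(\Psi)}(\si_!M, X)$ to $f_i \circ \eta_M \in \Hom_{\Psi_i}(M, X_i)$. For the inverse, given $g: M \to X_i$, I define $\tilde g : \si_!M \to X$ by specifying $\tilde g_j$ on the $p$-summand of $(\si_!M)_j$ to be the composite
$$p_*(M) \xto{p_*(g)} p_*(X_i) \xto{X_p} X_j.$$

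What remains is a routine verification in three steps. First, that $\tilde g$ is a morphism of representations: the required compatibility square decomposes across the coproduct indexing, and on each $p$-summand reduces, via $a_*(p_*(g)) = (ap)_*(g)$ and the fact that $a_*$ preserves coproducts, to the cocycle identity $X_{ap} = X_a \circ a_*(X_p)$ for $X$. Second, that the two assignments are mutually inverse: the composite $g \mapsto \tilde g \mapsto \tilde g_i \circ \eta_M$ returns $g$ upon inspecting the $\Id_i$-summand and using $X_{\Id_i} = \Id_{X_i}$; conversely, any $f : \si_!M \to X$ is forced, on the $p$-summand of $(\si_!M)_j$, to coincide with $X_p \circ p_*(f_i \circ \eta_M)$, since $f$ must intertwine $(\si_!M)_p$ with $X_p$, and the image of the $\Id_i$-summand under $(\si_!M)_p$ is precisely the $p$-summand. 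Third, naturality in $M$ and $X$ is immediate from the componentwise definitions.

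The only real obstacle is bookkeeping: tracking the reindexing $p \mapsto ap$ through the coproduct decompositions and checking that each compatibility reduces either to associativity in $\cI$ or to the hypothesis that the $\Psi_a$ preserve coproducts. No conceptual difficulty remains, since the construction is forced by the universal property one is aiming at.
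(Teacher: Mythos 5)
Your construction is exactly the one in the paper: the same coproduct $(\si_!M)_j=\bigoplus_{p\in\cI(i,j)}p_*M$, the same structure maps via the reindexing $p\mapsto ap$ using that $\Psi_a$ preserves coproducts, and the same adjunction bijection (unit given by the $\Id_i$-summand, inverse given componentwise by $X_p\circ p_*(g)$). Your writeup just spells out the routine verifications that the paper leaves implicit; there is no gap.
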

\begin{proof}
\def\lb#1{Y}
Given an object $M\in\Psi_i$, we define an object 
$\lb M=\si_! (M)\in\cR=\Rep(\Psi)$ as follows.
For any $j\in \cI$, define
$$\lb M_j=\si_!(M)_j=\bop_{a\in\cI(i,j)}a_*M\in\Psi_j.$$
Let us construct the map $\lb M_b:b_*Y_j\to Y_k$ for any $b\in\cI(j,k)$.
For any morphisms $a\in\cI(i,j)$
consider the canonical embedding
$$b_*a_* M\iso(ba)_*M\emb \lb M_k.$$
These maps induce
$$\lb M_b:b_*\lb M_j
=b_*\rbr{\bop_{a\in\cI(i,j)}a_*M}
\iso\bop_{a\in\cI(i,j)}b_*a_*M
\to\lb M_k.$$
In this way we obtain an object $\lb M\in\cR$.
For any $X\in\cR$ there is a natural 
isomorphism
\begin{equation}
\label{eq:left adj}
\hom\cR(\lb M,X)\iso \hom{\Psi_i}(M,X_i)
\end{equation}
Given $f\in \hom\cR(\lb M,X)$, we define
$g\in \hom{\Psi_i}(M,X_i)$ to be the composition 
$M\emb\lb M_i\xto {f_i}X_i$
of the map $f_i$ and an embedding corresponding to the identity $\Id\in\cI(i,i)$.
Conversely, given $g\in\Hom_{\Psi_i}(M,X_i)$, for every $j\in\cI$ we construct
$f_j:\lb M_j=\bop_{a\in\cI(i,j)}a_*M\to X_j$ such that
the component $a_*M\to X_j$ is given by
$a_*M\xto{a_*(g)}a_*X_i\xto{X_a}X_j$.
\end{proof}

Applying duality \eqref{duality} we obtain

\begin{theorem}
\label{th:right adj}
Let $\Phi:\cI^\op\to\Cat$ be a diagram such that the categories $\Phi_i$ have products and the 
functors $\Phi_a:\Phi_j\to\Phi_i$ preserve them for all 
$a:i\to j$ in \cI.
Then for every $i\in\cI$ the forgetful functor
$$\si^*:\dRep(\Phi)\to\Phi_i,\qquad X\mto X_i$$
has a right adjoint functor
$\si_{*}=\Phi_i\to\dRep(\Phi)$.
Explicitly, given $M\in\Phi_i$, the object $\sir M$ is defined by
$$\sir M_j=\prod_{a\in\cI(j,i)}\Phi_aM\in\Phi_j.$$
\end{theorem}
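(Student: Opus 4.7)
The plan is to deduce this from Theorem \ref{th:adj} by invoking the duality \eqref{duality}. Given the left-exact diagram $\Phi:\cI^\op\to\Cat$, I would first form the opposite diagram $\Phi^\op$ whose value at $i$ is the opposite category $\Phi_i^\op$ and whose transition functors are $(\Phi_a)^\op$; after reversing the direction of $\cI$, this becomes a covariant diagram $\cI\to\Cat$ in the sense of \S\ref{sec:diag reps}. Products in $\Phi_i$ translate into coproducts in $\Phi_i^\op$, and functors that preserve products translate into functors that preserve coproducts, so the hypotheses of Theorem \ref{th:adj} are satisfied by this opposite diagram.

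Theorem \ref{th:adj} then supplies a left adjoint $\si_!$ to the forgetful functor out of $\Rep(\Phi^\op)$. The duality \eqref{duality} identifies $\Rep(\Phi^\op)$ with $\dRep(\Phi)^\op$, and this identification intertwines the two forgetful functors componentwise. Since a left adjoint between opposite categories is the same as a right adjoint between the original categories, this promotes $\si_!$ to a right adjoint $\si_*$ of $\si^*:\dRep(\Phi)\to\Phi_i$, and the adjunction isomorphism
$$\Hom_{\dRep(\Phi)}(X,\sir M)\iso\Hom_{\Phi_i}(X_i,M)$$
reads off directly from \eqref{eq:left adj}.

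To verify the explicit formula, I would trace the construction of Theorem \ref{th:adj} through the duality. There, $\si_!(M)_j=\bop_{a\in\cI(i,j)}a_*M$; passing to the opposite, the coproduct turns into a product, the hom-set $\cI(i,j)$ of the opposite category becomes $\cI(j,i)$ back in $\cI$, and the functor $a_*$ (opposite to $\Phi_a$) turns back into $\Phi_a:\Phi_i\to\Phi_j$ for $a\in\cI(j,i)$. The result is
$$\sir M_j=\prod_{a\in\cI(j,i)}\Phi_aM\in\Phi_j,$$
with structure maps $\sir M_b:\sir M_j\to b^*\sir M_k$ for $b\in\cI(j,k)$ induced by the reindexing $\cI(k,i)\to\cI(j,i)$, $a\mto ab$, together with the canonical isomorphism $b^*\prod_a\Phi_aM\iso\prod_ab^*\Phi_aM$ coming from the hypothesis that $b^*=\Phi_b$ preserves products.

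The only real source of friction is the bookkeeping of indices and variances under the duality: getting $\cI(j,i)$ rather than $\cI(i,j)$ and placing $\Phi_a$ on the correct side of each hom-set demand some care. If one wishes to avoid the dualization altogether, an equally acceptable route is to construct $\si_*$ directly, mirroring the proof of Theorem \ref{th:adj}: the counit at $M$ is the projection of $\prod_{a\in\cI(i,i)}\Phi_aM$ onto its $\Id_i$-component, and to a morphism $g:X_i\to M$ one associates $f:X\to\sir M$ whose $j$-th component $f_j:X_j\to\prod_{a\in\cI(j,i)}\Phi_aM$ has $a$-component $\Phi_a(g)\circ X_a$, after which a diagram chase confirms naturality and mutual inverseness.
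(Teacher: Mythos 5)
Your proposal is correct and follows exactly the route the paper takes: the paper's entire proof is ``Applying duality \eqref{duality} we obtain,'' and your argument simply spells out that dualization of Theorem \ref{th:adj}, with the index bookkeeping ($\cI(i,j)$ becoming $\cI(j,i)$, coproducts becoming products) carried out correctly.
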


\begin{corollary}
\label{cor:inj and proj}
Under the above conditions
\begin{enumerate}
\item 
If $M\in\Psi_i$ is projective, then $\si_!M\in\Rep(\Psi)$ is also projective.
\item
If $M\in\Phi_i$ is injective, then $\si_*M\in\dRep(\Phi)$ is also injective.
\end{enumerate}
\end{corollary}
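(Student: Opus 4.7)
The plan is to derive both statements from the adjunctions established in Theorems~\ref{th:adj} and~\ref{th:right adj} together with the componentwise characterisation of exactness from Theorem~\ref{th:abelian} (and its dual corollary). Recall the standard reformulation: an object $P$ in an abelian category is projective iff $\Hom(P,-)$ is exact, and $I$ is injective iff $\Hom(-,I)$ is exact.

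For part~(1), I would start from the natural isomorphism
\[
\Hom_{\Rep(\Psi)}(\si_!M,-)\iso\Hom_{\Psi_i}(M,\si^*(-))
\]
provided by Theorem~\ref{th:adj}. The right-hand side is a composition of two functors, so it suffices to show each is exact. The forgetful functor $\si^*:\Rep(\Psi)\to\Psi_i$, $X\mto X_i$, is exact: indeed, by the corollary following Theorem~\ref{th:abelian} (obtained via the duality \eqref{duality}), a sequence in $\Rep(\Psi)$ is exact iff it is exact componentwise, so in particular taking the $i$-th component preserves exactness. Since $M$ is projective, $\Hom_{\Psi_i}(M,-)$ is also exact. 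Composing, $\Hom_{\Rep(\Psi)}(\si_!M,-)$ is exact, hence $\si_!M$ is projective.

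For part~(2), the argument is formally dual, using Theorem~\ref{th:right adj}. The natural isomorphism
\[
\Hom_{\dRep(\Phi)}(-,\sir M)\iso\Hom_{\Phi_i}(\si^*(-),M)
\]
exhibits $\Hom_{\dRep(\Phi)}(-,\sir M)$ as the composition of the exact forgetful functor $\si^*:\dRep(\Phi)\to\Phi_i$ (exact by Theorem~\ref{th:abelian}, since exactness in $\dRep(\Phi)$ is tested componentwise) with the contravariant exact functor $\Hom_{\Phi_i}(-,M)$ (exact because $M$ is injective). Therefore $\Hom_{\dRep(\Phi)}(-,\sir M)$ sends short exact sequences in $\dRep(\Phi)$ to short exact sequences, and $\sir M$ is injective.

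There is no real obstacle here; the only subtlety worth flagging explicitly is that one must invoke the correct exactness statement for the two forgetful functors, and in the $\Rep(\Psi)$ case this requires passing through the duality \eqref{duality} rather than applying Theorem~\ref{th:abelian} directly. Once this is noted, both claims are immediate formal consequences of the adjunctions.
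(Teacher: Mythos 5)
Your proof is correct and follows essentially the same route as the paper: both parts are deduced from the adjunctions of Theorems~\ref{th:adj} and~\ref{th:right adj} by writing $\Hom(\si_!M,-)$ (resp.\ $\Hom(-,\si_*M)$) as a composition of the exact forgetful functor $\si^*$ with the exact functor $\Hom_{\Psi_i}(M,-)$ (resp.\ $\Hom_{\Phi_i}(-,M)$). The paper only writes out part~(1) and leaves the dual statement implicit, whereas you spell out both and justify the exactness of $\si^*$ via the componentwise description of exact sequences; this is a harmless elaboration of the same argument.
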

\begin{proof}
Let us prove the first statement.
If $M\in\Psi_i$ is projective,
then
$$\Hom_{\Rep(\Psi)}(\si_!M,-)\iso\Hom_{\Psi_i}(M,\si^*(-))$$
is an exact functor as a composition of two exact functors $\si^*$ and $\Hom_{\Psi_i}(M,-)$.
This implies that $\si_!M$ is projective.
\end{proof}

\section{Standard resolutions and coresolutions}
\label{sec:quivers}
Recall that a quiver $Q=(Q_0,Q_1,s,t)$ is a directed 
\mbox{(multi-)} graph.
Here $Q_0,\,Q_1$ are sets, called the sets of vertices and 
arrows respectively and $s,t:Q_1\to Q_0$ are maps, called
the source and target maps respectively.
All quivers are assumed to be finite unless otherwise stated.
We denote an arrow $a\in Q_1$ with $i=s(a)$ and 
$j=t(a)$ as $a:i\to j$.
Define the category $\cP=\cP(Q)$ of paths in $Q$ to have the set of objects $Q_0$ and the set of morphisms 
$\cP(i,j)=\Hom_\cP(i,j)$ consisting of all directed paths from $i\in Q_0$ to $j\in Q_0$.
Define a $Q$-diagram of categories to be a functor $\Psi:\cP\to\Cat$.
It is uniquely determined by the categories $\Psi_i$ for $i\in Q_0$ and the functors $a_*=\Psi_a:\Psi_i\to\Psi_j$ for arrows $a:i\to j$ in $Q$.

\begin{definition}
Given a $Q$-diagram $\Psi$, define the category of $Q$-representations (\wrt \Psi) to be the category $\Rep(Q,\Psi)=\Rep(\Psi)$.
Its objects consist of data:
\begin{enumerate}
\item An object $X_i\in \Psi_i$ for all $i\in Q_0$.
\item A morphism $X_a:a_*(X_i)\to X_j$ in $\Psi_j$ for every 
arrow $a:i\to j$ in $Q$.
\end{enumerate}
\end{definition}

Similarly, define a $Q^\op$-diagram to be a diagram $\Phi:\cP(Q)^\op\to\Cat$, which is uniquely determined by the categories $\Phi_i$ for $i\in Q_0$ and the functors $a^*=\Phi_a:\Phi_j\to\Phi_i$ for arrows $a:i\to j$ in $Q$.
In this case we define $\dRep(Q,\Phi)=\dRep(\Phi)$.

\begin{remark}
To give a motivation of the
construction of the next theorem, let us consider the case of quiver representations in vector spaces.
Let $A=\bk Q$ be the path algebra of a quiver $Q$ over a field \bk.
Given a $Q$-representation $X$, there is a short exact sequence of $Q$-representations
\cite[p.7]{crawley-boevey_lectures}
$$0\to \bop_{a:i\to j}Ae_j\ts X_i\to\bop_i Ae_i\ts X_i\to X\to 0,$$
called the standard resolution of $X$.
Our goal is to construct an analogue of this resolution in the category $\Rep(Q,\Psi)$ as well as its dual version, called a coresolution.
\end{remark}

\begin{theorem}
\label{standard}
Let $\Psi:\cP(Q)\to\Cat$ be a diagram such that
the categories $\Psi_i$ have (countable) coproducts and the functors $\Psi_a:\Psi_i\to\Psi_j$ preserve them for all arrows $a:i\to j$ in $Q$.
For any object $X\in\Rep(\Psi)$, there is a short exact sequence
$$0\to\bop_{a:i\to j}\si_!(\Psi_aX_i)\xto\be\bop_{i}\si_! (X_i)\xto\ga X\to0$$
called a standard resolution of $X$, 
where
for any arrow $a:i\to j$ and any path $p:j\to k$,
$$\be:\Psi_p\Psi_aX_i\xto{(\Id,-\Psi_pX_a)} \Psi_{pa}X_i\oplus \Psi_pX_j,
\qquad
\ga:\Psi_pX_j\xto{X_p}X_k.
$$
\end{theorem}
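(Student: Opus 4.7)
The plan is to first verify that $\beta$ and $\gamma$ genuinely define morphisms in $\Rep(\Psi)$, then to reduce exactness to a vertex-by-vertex check, and finally to exploit the combinatorial bijection between non-trivial paths and pairs (first arrow, remaining path).

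That $\gamma$ is a morphism in $\Rep(\Psi)$ is immediate from the adjunction of Theorem~\ref{th:adj}: its restriction to the summand $\si_!(X_i)$ is the morphism adjoint to $\Id_{X_i}\in\Hom_{\Psi_i}(X_i,X_i)$. Similarly, the restriction of $\beta$ to $\si_!(\Psi_aX_i)$ (for an arrow $a:i\to j$) is adjoint to the morphism $\Psi_aX_i\to\bop_k\si_!(X_k)_j$ whose only nonzero components are $\Id$ into the $a$-slot of $\si_!(X_i)_j$ and $-X_a$ into the $e_j$-slot of $\si_!(X_j)_j$. Unfolding this through the explicit formula in Theorem~\ref{th:adj} recovers the maps $(\Id,-\Psi_pX_a)$ and $X_p$ appearing in the statement.

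To check exactness, fix $k\in Q_0$ and work in $\Psi_k$. Set
$$A=\bop_i\bop_{q\in\cP(i,k)}\Psi_qX_i,\qquad B=\bop_{a:i\to j,\ p\in\cP(j,k)}\Psi_{pa}X_i,$$
so that the sequence at vertex $k$ reads $0\to B\to A\to X_k\to 0$. The crucial observation is that every non-trivial path $q:i\to k$ decomposes uniquely as $q=pa$ with $a$ its first arrow; this yields a canonical identification $B\iso A^+$ in the splitting $A=X_k\oplus A^+$, where $X_k$ is the summand at the trivial path $e_k$ and $A^+$ collects the non-trivial paths. Under this splitting, $\gamma=(\Id_{X_k},\gamma^+)$ with $\gamma^+|_{\Psi_qX_i}=X_q$, while $\beta=(\beta_0,\ \Id_{A^+}-N)$, where $N:A^+\to A^+$ sends the $(a,p)$-summand to the $p$-summand via $\Psi_pX_a$ when $p$ is non-trivial and vanishes otherwise. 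A direct computation using the representation law $X_{pa}=X_p\circ\Psi_pX_a$ (condition (3) in the definition of $\Rep(\Psi)$, applied to the composable pair $(p,a)$) gives $\gamma\beta=0$, equivalently $\beta_0=-\gamma^+\circ(\Id-N)$.

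The remaining point is that $\Id_{A^+}-N$ is an isomorphism: each application of $N$ strictly shortens the underlying path by stripping off its first arrow, so on every finite partial direct sum $N$ is nilpotent and the formal series $\sum_{m\geq 0}N^m$ defines a two-sided inverse summand-by-summand. This immediately yields injectivity of $\beta$ and identifies $\im\beta$ with the graph of $-\gamma^+$, which is precisely $\ker\gamma$; together with $\gamma$ being split surjective via the inclusion $X_k\hookrightarrow A$, the sequence at $k$ is short exact, and hence the original sequence in $\Rep(\Psi)$ is short exact. The main obstacle I anticipate is the bookkeeping for the possibly infinite coproduct $A^+$ when $Q$ has cycles: it is the countable-coproduct hypothesis, combined with the local nilpotence of $N$, that makes the Neumann-series inverse rigorous and is therefore the heart of the argument.
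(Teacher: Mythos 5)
Your proof is correct, and it is worth comparing with the paper's. Both arguments rest on the same combinatorial core: reduce to a componentwise check at each vertex $k$, and exploit the unique factorization of a non-trivial path $q$ as $q=pa$ with $a$ its first arrow, which makes $\be$ unitriangular with respect to path length. Where you diverge is in how you convert that triangularity into exactness. The paper grades the component at $k$ by path length, writes $\be$ and $\ga$ as triangular matrices, passes to a module category (``we can assume $\Psi_k$ is a category of modules over an algebra''), and runs an element chase: a maximal-degree argument for injectivity of $\be$ and a minimal-degree descent for exactness in the middle. You instead split off the trivial-path summand $X_k$, identify the source of $\be$ with the non-trivial-path part $A^+$ of the target, write $\be=(\be_0,\Id_{A^+}-N)$ with $N$ locally nilpotent (it strictly shortens paths and kills length-one paths), and invert $\Id-N$ by a summandwise-finite Neumann series; the relation $\be_0=-\ga^+\circ(\Id-N)$ coming from $X_{pa}=X_p\circ\Psi_p(X_a)$ then exhibits the component sequence as isomorphic to the split sequence $0\to A^+\to X_k\oplus A^+\to X_k\to0$. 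Your version buys something real: it works directly in $\Psi_k$, assuming only the stated (countable) coproducts, and avoids the paper's reduction to modules, which is a slightly delicate step when infinite coproducts are involved (an exact embedding into modules need not preserve them). The one point you share with the paper and should perhaps make explicit is that componentwise exactness yields exactness in $\Rep(\Psi)$ because kernels and cokernels there are computed componentwise.
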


\begin{proof}
It is clear that $\ga\be=0$.
We need to verify exactness of the components for every vertex $k\in Q_0$.
For every $n\ge0$, define the degree $n$ component
$$Z_n=\bop_{\ov{p:i\to k}{l(p)=n}}\Psi_pX_i\in\Psi_k,$$
where $k\in Q_0$ is fixed, $i\in Q_0$ varies, and $l(p)$ is the length of the paths $p$.
We have to verify that the sequence
$$0\to\bop_{n\ge1}Z_n\xto\be\bop_{n\ge0}Z_n\xto\ga Z_0\to0$$
is exact in $\Psi_k$.
The matrices of \be and \ga are of the form
$$
\be=\begin{pmatrix}
*&0&0&0&\dots\\
1&*&0&0&\dots\\
0&1&*&0&\dots\\
0&0&1&*&\dots\\
\hdotsfor5
\end{pmatrix}\qquad
\ga=
\begin{pmatrix}
1&*&*&\dots
\end{pmatrix}
$$
We can assume that $\Psi_k$ is a category of modules over an algebra.
It is clear that \ga is surjective.
If $a\in\Ker\be$ is nontrivial, let $a_n\in Z_n$ be its nonzero component of maximal degree.
Then the restriction of $\be(a)$ to $Z_n$ is $a_n\ne0$. A contradiction.
Finally, given $a\in\bop_{n\ge0}Z_n$, let $\deg a=\max\sets{n\ge0}{a_n\ne0}$.
Choose $0\ne a\in\Ker\ga\ms\Im\be$ with the minimal possible $n=\deg a$.
If $n\ge1$, then $a'=a-\be(a_n)$ has degree $<n$ and we still have $a'\in\Ker\ga\ms\Im\be$ as $\Im\be\sbs\Ker\ga$.
A contradiction. Therefore $a$ has degree zero and $\ga(a)=a=0$. A contradiction.
\end{proof}

Taking the opposite categories we obtain a dual version of the above theorem.

\begin{theorem}
\label{th:costand}
Let $\Phi:\cP(Q)^\op\to\Cat$ be a diagram such that
the categories $\Phi_i$ have (countable) products and the functors $\Phi_a:\Phi_j\to\Phi_i$ preserve them for all arrows $a:i\to j$ in $Q$.
For any object $X\in\dRep(\Phi)$, there is a short exact sequence
$$0\to X\xto\ga 
\bop_{i}\sir {X_i}\xto\be
\bop_{a:j\to i}\sir{\Phi_aX_i}
\to0$$
called a standard coresolution of $X$,
where
for any arrow $a:j\to i$ and any path $p:k\to j$,
$$\be: \Phi_{ap}X_i\oplus \Phi_pX_j
\xto{(\Id,-\Phi_pX_a)}\Phi_p\Phi_aX_i,
\qquad
\ga:X_k\xto{X_p}\Phi_pX_j.
$$
\end{theorem}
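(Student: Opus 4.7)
The plan is to deduce Theorem~\ref{th:costand} from Theorem~\ref{standard} via the duality \eqref{duality}, as the sentence preceding the statement suggests. Starting from the diagram $\Phi:\cP(Q)^\op\to\Cat$ with each $\Phi_i$ having countable products and each $\Phi_a$ preserving them, I would form a diagram $\Psi:\cP(Q^\op)\to\Cat$ by setting $\Psi_i=\Phi_i^\op$ and, for each arrow $a:j\to i$ of $Q$ (equivalently $a:i\to j$ of $Q^\op$), taking $\Psi_a=(\Phi_a)^\op:\Phi_i^\op\to\Phi_j^\op$. Countable products in $\Phi_i$ are the same data as countable coproducts in $\Phi_i^\op$, and $\Phi_a$ preserves products precisely when $(\Phi_a)^\op$ preserves coproducts, so $\Psi$ satisfies the hypotheses of Theorem~\ref{standard} on the quiver $Q^\op$. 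Under the duality \eqref{duality}, the category $\Rep(\Psi)$ is anti-equivalent to $\dRep(\Phi)$; an object $X\in\dRep(\Phi)$ transports to an object $X'\in\Rep(\Psi)$ with the same components $X_i$ but transposed structure morphisms.

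Next I would apply Theorem~\ref{standard} to $X'$ to obtain a short exact sequence
\[0\to\bop_{a:i\to j\text{ in }Q^\op}\si_!(\Psi_a X_i)\to\bop_{i}\si_!(X_i)\to X'\to 0\]
in $\Rep(\Psi)$, and then reverse arrows via the duality to obtain a short exact sequence in $\dRep(\Phi)$ of the advertised shape. To identify the terms, I note that the forgetful functor $\Rep(\Psi)\to\Psi_i$ corresponds under the duality to the forgetful functor $\dRep(\Phi)\to\Phi_i$, so its left adjoint $\si_!$ in $\Rep(\Psi)$ must correspond to a right adjoint of the latter functor; by uniqueness of adjoints this is the functor $\si_*$ of Theorem~\ref{th:right adj}. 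The explicit formulas also agree: the $j$-component of $\si_!(M)$ is $\bop_{a\in\cP(Q^\op)(i,j)}\Psi_a M=\bop_{a\in\cP(Q)(j,i)}\Phi_a M$, which as a coproduct in $\Phi_j^\op$ is a product in $\Phi_j$, matching $\si_*(M)_j=\prod_{a\in\cP(Q)(j,i)}\Phi_a M$.

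The main obstacle is translating the explicit descriptions of $\be$ and $\ga$ in Theorem~\ref{standard} through the duality so that they recover the formulas stated in the present theorem. This is essentially careful bookkeeping with opposite categories: the identity component of $\be$ remains an identity, while the component $-\Psi_p(X_a)$ (a morphism in $\Psi_k=\Phi_k^\op$) dualizes to the morphism $-\Phi_p(X_a)$ in $\Phi_k$, where now $X_a:X_j\to\Phi_a X_i$ is the structure morphism of $X$ in $\dRep(\Phi)$; similarly the component $X_p$ of $\ga$ dualizes to the map $X_p:X_k\to\Phi_p X_j$ appearing in the coresolution. Once these identifications are pinned down, exactness of the coresolution follows from exactness of the standard resolution together with the fact that the anti-equivalence \eqref{duality} is exact.
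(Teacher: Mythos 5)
Your proposal is correct and is exactly the paper's approach: the paper's entire proof is the one-line remark ``Taking the opposite categories we obtain a dual version of the above theorem,'' i.e.\ it deduces Theorem~\ref{th:costand} from Theorem~\ref{standard} via the duality \eqref{duality}, just as you do. Your write-up simply makes explicit the bookkeeping (products versus coproducts, $\si_!$ versus $\si_*$, and the dualized formulas for $\be$ and $\ga$) that the paper leaves to the reader.
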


\section{Homological properties}
\label{sec:results}
Let $Q$ be a quiver and \Psi be a $Q$-diagram, that is, a collection of categories $(\Psi_i)_{i\in Q_0}$ and functors $a_*=\Psi_a:\Psi_i\to\Psi_j$ for arrows $a:i\to j$ in $Q$.
We will assume that the categories $\Psi_i$ are Grothendieck \cite[\S8.3]{kashiwara_categories} (in particular they have enough injective objects, admit small inductive and projective limits, and their small filtrant inductive limits and direct sums are exact), hence we don't assume $\Psi_i$ to be small anymore.
\begin{notes}
By the definition of a Grothendieck category \cite[8.3.24]{kashiwara_categories} its small filtrant inductive limits are exact. One can represent a direct sum as a filtrant inductive limit of finite direct sums, which are exact.
\end{notes}
Note that any small abelian category \cA can be canonically 
embedded into the category $\Ind(\cA)$ of ind-objects which is a Grothendieck category \cite[\S8.6]{kashiwara_categories}.
This category is equivalent to the Quillen abelian envelope of \cA
(usually applied to exact categories \cite[Appendix A]{keller_chain}),
consisting of additive left-exact functors $F:\cA^\op\to\Mod\bZ$
\cite[\S8.6]{kashiwara_categories}.
For example, if $X$ is a noetherian scheme and $\cA=\Coh X$ is the category of coherent sheaves on~$X$, then $\Ind(\cA)$ is equivalent to the category of quasi-coherent sheaves on $X$ \cite[Appendix]{hartshorne_residues}. 
On the level of derived categories there is a natural equivalence $D^b(\cA)\iso D^b_\cA(\Ind(\cA))$ \cite[\S15.3]{kashiwara_categories}, hence the \Ext-groups are unchanged.

\begin{theorem}
\label{th:injectives}
Let $\Phi$ be a (left exact) $Q^\op$-diagram of Grothendieck categories such that $\Phi_a$ preserve products for all arrows $a$ in $Q$.
Then the category $\dRep(Q,\Phi)$ has enough injectives.
\end{theorem}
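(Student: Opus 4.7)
The strategy is to combine the existence of enough injectives in each Grothendieck category $\Phi_i$ with the right adjoint functor $\si_*$ constructed in Theorem~\ref{th:right adj}, and then take a product over all vertices.

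Given $X \in \dRep(Q,\Phi)$, the plan is as follows. For every vertex $i \in Q_0$, since $\Phi_i$ is Grothendieck, choose a monomorphism $\iota_i: X_i \emb I_i$ into an injective object $I_i \in \Phi_i$. Theorem~\ref{th:right adj} applies by hypothesis (each $\Phi_a$ preserves products), producing $\si_*^{(i)} I_i \in \dRep(\Phi)$, and by Corollary~\ref{cor:inj and proj} each $\si_*^{(i)} I_i$ is injective (this uses that the forgetful functor $\si^*$ is exact, which is immediate from Theorem~\ref{th:abelian}). By adjunction, $\iota_i$ corresponds to a morphism $\tilde\iota_i: X \to \si_*^{(i)} I_i$.

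Next, I would form the product $I = \prod_{i \in Q_0} \si_*^{(i)} I_i$ in $\dRep(\Phi)$. This product exists and is computed componentwise because each $\Phi_j$ has small products and, by assumption, each $\Phi_a$ preserves products; consequently a componentwise product carries a canonical representation structure. Being a product of injectives (in any abelian category with products, a product of injectives is injective, via $\Hom(-,\prod I_\al) \iso \prod \Hom(-,I_\al)$), the object $I$ is injective in $\dRep(\Phi)$. The morphisms $\tilde\iota_i$ assemble into a single morphism $\tilde\iota: X \to I$.

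Finally, I would verify that $\tilde\iota$ is a monomorphism. By Theorem~\ref{th:abelian} this reduces to checking that each component $X_j \to I_j = \prod_i (\si_*^{(i)} I_i)_j$ is a monomorphism in $\Phi_j$. Projecting onto the factor $i = j$ gives a map into $(\si_*^{(j)} I_j)_j = \prod_{p:j\to j} \Phi_p I_j$, and further projecting to the summand indexed by the identity path recovers $\iota_j: X_j \emb I_j$; since this composite projection is already monic, so is the $j$-component of $\tilde\iota$.

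There is no substantial obstacle: all the work has been done in Theorems~\ref{th:right adj} and~\ref{th:abelian} together with Corollary~\ref{cor:inj and proj}. The only mildly delicate point is bookkeeping — namely, observing that the product of the $\si_*^{(i)} I_i$ genuinely lives in $\dRep(\Phi)$ (which relies on the product-preservation hypothesis on the $\Phi_a$) and that one recovers the original monomorphisms $\iota_j$ as a component of the projection, so that no auxiliary injectivity or exactness argument is needed.
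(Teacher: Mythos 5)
Your proof is correct. It lands on the same injective object as the paper's proof --- since $Q_0$ is finite, your product $\prod_i\si_*^{(i)}I_i$ is the paper's $\bop_i\sir{I_i}$ --- and it uses the same two supporting facts (Corollary~\ref{cor:inj and proj} for injectivity of $\sir{I_i}$, and componentwise exactness from Theorem~\ref{th:abelian}). Where you differ is in how the monomorphism $X\emb\bop_i\sir{I_i}$ is produced: the paper first embeds $X$ into $\bop_i\sir{X_i}$ via the standard coresolution of Theorem~\ref{th:costand} and then applies the left exact functor $\si_*$ to the embeddings $X_i\emb I_i$, whereas you assemble the adjunction units $X\to\si_*^{(i)}I_i$ directly and verify monicity by projecting the $j$-th component onto the identity-path factor of $\rbr{\si_*^{(j)}I_j}_j=\prod_{p\in\cP(j,j)}\Phi_pI_j$, which recovers $\iota_j$. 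Your route is more self-contained: it needs only the explicit formula for $\si_*$ from Theorem~\ref{th:right adj} and not the exactness of the whole coresolution, so it would survive even in settings where Theorem~\ref{th:costand} is unavailable. The paper's route is shorter on the page because the coresolution has already been established, and it makes visible that the same mechanism (the map $\ga:X\to\bop_i\sir{X_i}$) underlies both the injectivity statement and the long exact sequence of Theorem~\ref{th:long Phi}. Your parenthetical points are also sound: the componentwise product of representations carries a canonical structure because the $\Phi_a$ preserve products (indeed, for the finite product over $Q_0$ additivity alone would suffice), and a product of injectives is injective by $\Hom(-,\prod I_\al)\iso\prod\Hom(-,I_\al)$.
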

\begin{proof}
Using the standard coresolution (Theorem \ref{th:costand}),
we can embed $X\in \dRep(Q,\Phi)$ into
$\bop_{i}\sir {X_i}$.
Therefore we have to show that every $\sir{X_i}$ can be embedded into an injective object.
Let $X_i\emb I$ be an embedding into an injective object in $\Phi_i$.
Then the induced map $\sir {X_i}\to\sir I$ is a monomorphism as the functor $\si_*$ is right adjoint, hence left exact.
The object $\sir I$ is injective by Corollary \ref{cor:inj and proj}.
\end{proof}

\begin{remark}
\label{rem:both exact 1}
We will often encounter the following situation.
Let $L:\cA\to\cB$ be a functor left adjoint to a functor $R:\cB\to\cA$.
If $\cA,\cB$ are abelian categories with enough injectives and $L,R$ are exact functors, then
$$\Ext^k_\cB(LX,Y)\iso\Ext^k_\cA(X,RY)\qquad \forall X\in\cA,\,Y\in\cB,\,k\ge0.$$
Indeed, the functor $R$ maps injective objects to injective objects because of the exactness of~$L$.
As $R$ is itself exact, it maps an injective resolution of $Y$ to an injective resolution of~$RY$.
Applying the functors $\Hom(LX,-)$ and $\Hom(X,-)$ to these resolutions, we obtain the above isomorphism.
\end{remark}

\begin{theorem}
\label{th:long Psi}
Let $\Phi$ be a $Q^\op$-diagram of Grothendieck categories that admits an exact left adjoint diagram $\Psi:\cP(Q)\to\Cat$.
Then for any two objects $X,Y\in\cR=\dRep(\Phi)$,
there is a long exact sequence
\begin{multline*}
0
\to\Hom_\cR(X,Y)\to\bop_i\Hom_{\Phi_i}(X_i,Y_i)\to\bop_{a:i\to j}\Hom_{\Phi_j}(\Psi_aX_i,Y_j)\\
\to\Ext^1_\cR(X,Y)\to\bop_i\Ext^1_{\Phi_i}(X_i,Y_i)\to\bop_{a:i\to j}\Ext^1_{\Phi_j}(\Psi_aX_i,Y_j)
\to\dots
\end{multline*}
\end{theorem}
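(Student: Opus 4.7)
The plan is to apply the standard resolution of Theorem \ref{standard} to $X$ and then take $\Hom_\cR(-,Y)$, identifying the resulting $\Ext$-terms via the adjunction $\si_!\dashv\si^*$ of Theorem \ref{th:adj}. The hypotheses of Theorem \ref{standard} are in force: each $\Psi_a$, being a left adjoint, preserves coproducts, and the Grothendieck assumption on each $\Phi_i$ guarantees small coproducts exist. Under the equivalence $\dRep(\Phi)\iso\Rep(\Psi)$ of Remark \ref{rem:prod and coprod}, Theorem \ref{standard} yields a short exact sequence in $\cR=\dRep(\Phi)$,
$$0\to\bop_{a:i\to j}\sil{\Psi_a X_i}\to\bop_i\sil{X_i}\to X\to 0.$$

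Since each $\Phi_a$, as a right adjoint, preserves products, Theorem \ref{th:injectives} applies and $\cR$ has enough injectives. Applying $\Hom_\cR(-,Y)$ to the short exact sequence above therefore produces a long exact sequence of $\Ext^k_\cR$-groups. Since $Q$ is finite, $\Hom$ and $\Ext$ pull out of the direct sums, giving
$$0\to\Hom_\cR(X,Y)\to\bop_i\Hom_\cR(\sil{X_i},Y)\to\bop_{a:i\to j}\Hom_\cR(\sil{\Psi_a X_i},Y)\to\Ext^1_\cR(X,Y)\to\cdots$$
with the higher $\Ext^k_\cR$-terms on the right having the analogous shape.

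It remains to establish the identification $\Ext^k_\cR(\sil M,Y)\iso\Ext^k_{\Phi_i}(M,Y_i)$ for $M\in\Phi_i$ and all $k\ge0$; this is the main technical step. I would invoke Remark \ref{rem:both exact 1} applied to the adjoint pair $(\si_!,\si^*)$, which requires both functors to be exact. Exactness of $\si^*$ is immediate from Theorem \ref{th:abelian}. For $\si_!$, the explicit formula $(\sil M)_j=\bop_{p\in\cP(i,j)}\Psi_p M$, together with the exactness of each composition $\Psi_p$ (inherited from exactness of each $\Psi_a$) and the exactness of direct sums in Grothendieck categories, shows that $\sil{(-)}$ is componentwise exact, hence exact in $\cR$ by Theorem \ref{th:abelian}. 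Substituting $\Ext^k_\cR(\sil{X_i},Y)\iso\Ext^k_{\Phi_i}(X_i,Y_i)$ and $\Ext^k_\cR(\sil{\Psi_a X_i},Y)\iso\Ext^k_{\Phi_j}(\Psi_a X_i,Y_j)$ into the long exact sequence yields the sequence claimed in the theorem.

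The only delicate point is the exactness of $\si_!$: when $Q$ has cycles the indexing set $\cP(i,j)$ can be infinite, but the Grothendieck hypothesis on $\Phi_j$ is precisely what guarantees that arbitrary direct sums remain exact, so no real difficulty arises.
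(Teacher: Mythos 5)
Your proposal is correct and follows essentially the same route as the paper: apply $\Hom_\cR(-,Y)$ to the standard resolution of Theorem \ref{standard}, then identify $\Ext^k_\cR(\si_!M,Y)\iso\Ext^k_{\Phi_i}(M,Y_i)$ via Remark \ref{rem:both exact 1} using the exactness of $\si^*$ (componentwise) and of $\si_!$ (from its explicit coproduct formula, exactness of $\Psi$, and exactness of coproducts in Grothendieck categories). You in fact spell out two points the paper leaves implicit --- that $\cR$ has enough injectives so the long exact sequence and Remark \ref{rem:both exact 1} apply, and that infinite path sets for cyclic $Q$ are harmless because direct sums are exact in Grothendieck categories --- which is a welcome addition rather than a deviation.
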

\begin{proof}
Applying the functor $\Hom(-,Y)$ to the standard resolution of $X$ (see Theorem \ref{standard})
$$0\to\bop_{a:i\to j}\si_!(\Psi_aX_i)\to\bop_{i}\si_! (X_i)\to X\to0$$
we obtain a long exact sequence.
The statement of the theorem will follow from
$$\Ext^k_\cR(\si_!{X_i},Y)\iso\Ext^k_{\Psi_i}(X_i,\si^*Y)$$
for any $X_i\in\Psi_i$ and $Y\in\cR$.
According to Remark \ref{rem:both exact 1} it is enough to show that both $\si^*,\si_!$ are exact.
The functor $\si^*$ is exact as a forgetful functor.
The functor $\si_!:\Psi_i\to\cR$ is exact because of its construction (see Theorem \ref{th:adj}),
as \Psi is exact and coproducts preserve exactness.
\end{proof}

\begin{remark}
Our next result relies on the exactness of products which appear in the construction of the functor $\si_*:\Phi_i\to\dRep(\Phi)$ (see Theorem \ref{th:right adj}).
It is known that products are not exact in Grothendieck categories in general (see e.g.\ \cite[Example 4.9]{krause_stable}).
To make our argument work we will assume that our quiver is acyclic, so that the path category $\cP(Q)$ is finite.
Then all products appearing in the construction of $\si_*$ are finite, hence exact.
Note also that the requirements of Theorem \ref{th:right adj} and Theorem \ref{th:costand} on the preservation of products by $\Phi_a$ can be omitted in this case.
\end{remark}

\begin{theorem}
\label{th:long Phi}
Let $Q$ be an acyclic quiver and $\Phi$ be an exact $Q^\op$-diagram of Grothendieck categories.
Then for any two objects $X,Y\in\cR=\dRep(\Phi)$,
there is a long exact sequence
\begin{multline*}
0
\to\Hom_\cR(X,Y)\to\bop_i\Hom_{\Phi_i}(X_i,Y_i)\to\bop_{a:i\to j}\Hom_{\Phi_i}(X_i,\Phi_aY_j)\\
\to\Ext^1_\cR(X,Y)\to\bop_i\Ext^1_{\Phi_i}(X_i,Y_i)\to\bop_{a:i\to j}\Ext^1_{\Phi_i}(X_i,\Phi_aY_j)
\to\dots
\end{multline*}
\end{theorem}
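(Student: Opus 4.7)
The plan is to run the dual of the argument used in Theorem \ref{th:long Psi}, swapping the roles of resolution and coresolution, and of $\si_!$ and $\si_*$. Write $\cR=\dRep(\Phi)$ and fix $X\in\cR$. Instead of resolving the first variable, I would coresolve the second variable: by Theorem \ref{th:costand}, for every $Y\in\cR$ there is a short exact sequence
\[
0\to Y\to\bop_i\si_*(Y_i)\to\bop_{a:j\to i}\si_*(\Phi_a Y_i)\to 0.
\]
Note that the hypothesis on the preservation of products by $\Phi_a$ needed in Theorem \ref{th:costand} is automatic here, because $Q$ is acyclic and therefore, for each fixed pair of vertices, the set of paths $\cP(Q)(k,j)$ is finite, so all products in sight are finite.

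Applying the functor $\Hom_\cR(X,-)$ to this short exact sequence produces a long exact sequence in $\Ext^\bullet_\cR(X,-)$. To match it with the sequence stated in the theorem, I would establish the adjunction-type isomorphism
\[
\Ext^k_\cR(X,\si_*M)\iso\Ext^k_{\Phi_i}(X_i,M)\qquad\text{for all }M\in\Phi_i,\ k\ge 0,
\]
where $\si_*$ is the right adjoint of the forgetful functor $\si^*$ at vertex $i$ provided by Theorem \ref{th:right adj}. For $k=0$ this is literally the adjunction. To pass to higher $k$, I would invoke Remark \ref{rem:both exact 1} (in its dual form, with the roles of left/right adjoints swapped): it suffices to check that both $\si^*$ and $\si_*$ are exact. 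Exactness of $\si^*$ is immediate, since Theorem \ref{th:abelian} says exactness in $\cR$ is tested componentwise.

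The step I expect to be the main point is the exactness of $\si_*$, and this is precisely where the acyclicity of $Q$ and the exactness of the diagram $\Phi$ enter. By Theorem \ref{th:right adj}, $\si_*M$ is given by
\[
(\si_*M)_j=\prod_{p\in\cP(Q)(j,i)}\Phi_p M,
\]
and because $Q$ is acyclic each indexing set $\cP(Q)(j,i)$ is finite. Finite products in an abelian category are biproducts and are therefore exact; combined with the exactness of every $\Phi_p$ (a composite of exact functors $\Phi_a$), this shows that $M\mapsto\si_*M$ is exact componentwise, hence exact in $\cR$ by Theorem \ref{th:abelian}. With both $\si^*$ and $\si_*$ exact, Remark \ref{rem:both exact 1} gives the $\Ext$-level adjunction, and substituting into the long exact sequence obtained from the coresolution of $Y$ yields exactly the stated sequence, using that $\Ext$ commutes with finite direct sums in either variable.
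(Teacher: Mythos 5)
Your proposal is correct and follows essentially the same route as the paper: apply $\Hom_\cR(X,-)$ to the standard coresolution of $Y$ from Theorem \ref{th:costand}, then identify $\Ext^k_\cR(X,\si_*M)$ with $\Ext^k_{\Phi_i}(X_i,M)$ via Remark \ref{rem:both exact 1} by checking that $\si^*$ and $\si_*$ are both exact, with acyclicity of $Q$ guaranteeing that the products in the construction of $\si_*$ are finite and hence exact. Your additional observation that the product-preservation hypothesis of Theorem \ref{th:costand} is automatic in the acyclic case is exactly the point made in the paper's remark preceding the theorem.
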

\begin{proof}
Applying the functor $\Hom_\cR(X,-)$ to the standard coresolution of $Y$ (see Theorem~\ref{th:costand})
$$0\to Y\to
\bop_{i}\sir {Y_i}\to
\bop_{a:i\to j}\sir{\Phi_aY_j}
\to0$$
we obtain a long exact sequence.
The statement of the theorem will follow from
$$\Ext^k_{\Phi_i}(\si^*X,Y_i)\iso \Ext^k_\cR(X,\sir {Y_i})$$
for any $X\in\cR$ and $Y_i\in\Phi_i$.
According to Remark \ref{rem:both exact 1} it is enough to show that both $\si^*,\si_*$ are exact.
The functor $\si^*$ is exact as a forgetful functor.
The functor $\si_*:\Phi_i\to\cR$ is exact because of its construction (see Theorem \ref{th:right adj}),
as \Phi is exact and finite products preserve exactness.
\end{proof}

\begin{remark}
\label{rem:both exact}
According to Remark \ref{rem:both exact 1},
we have $\Ext^k_{\Phi_j}\rbr{\Psi_a X_i,Y_j}
\iso\Ext^k_{\Phi_i}\rbr{X_i,\Phi_a Y_j}$
if both $\Phi$ and $\Psi$ are exact diagrams.
But this is not true in general.
\end{remark}
\section{Applications}
\label{sec:app}

\subsection{Parabolic vector bundles}
Let $X$ be a projective curve over an algebraically closed  field \bk and $w:X\to\bN$ be a map such that the set
$S=\sets{p\in X}{w_p>1}$ is finite.
A \mbox{(quasi-)} parabolic vector bundle over $X$ of type $w$ consists of a vector bundle $E$ together filtrations of the fibers
$$E_p=E_{p,0}\sps E_{p,1}\sps\dots\sps E_{p,w_p}=0\qquad \forall p\in S.$$
We define $E_p^i=E_p/E_{p,i}$.
A morphism between parabolic vector bundles $\bfE=(E,E_*)$ and $\bfF=(F,F_*)$ is a morphism $f:E\to F$ that preserves filtrations.
The category of parabolic vector bundles can be embedded into an abelian category of parabolic coherent sheaves, which is hereditary \cite{geigle_class,yokogawa_infinitesimal,heinloth_coherent,lin_spherical}.

\begin{theorem}
Given two parabolic vector bundles $\bfE=(E,E_*)$ and $\bfF=(F,F_*)$, there is an exact sequence
\begin{multline*}
0\to\Hom(\bfE,\bfF)
\to\Hom(E,F)\oplus\bop_{1\le i<w_p}\Hom(E_p^i,F_p^i)\to
\bop_{1\le i<w_p}\Hom(E_p^{i+1},F_p^i)\\
\to\Ext^1(\bfE,\bfF)
\to\Ext^1(E,F)\to 0
\end{multline*}
\end{theorem}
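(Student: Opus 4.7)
The plan is to realize parabolic coherent sheaves as twisted quiver representations and then apply Theorem~\ref{th:long Phi}. I would take the acyclic quiver $Q$ with a central vertex $0$ and, for each $p\in S$, a chain $0\xto{a_p}(p,w_p-1)\to(p,w_p-2)\to\dots\to(p,1)$ attached to it, together with the $Q^\op$-diagram $\Phi$ defined by $\Phi_0=\Qcoh X$ and $\Phi_{(p,i)}=\Vect$, with $\Phi_a=\Id$ along each chain arrow and $\Phi_{a_p}=i_{p,*}\colon\Vect\to\Qcoh X$ the skyscraper pushforward at $p$. Both $i_{p,*}$ (pushforward along a closed immersion) and the identity are exact, so $\Phi$ is an exact $Q^\op$-diagram of Grothendieck categories and the hypotheses of Theorem~\ref{th:long Phi} are met for $\cR=\dRep(Q,\Phi)$.

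The next step is to identify $\cR$ with the ambient abelian category of parabolic coherent sheaves from~\cite{yokogawa_infinitesimal,heinloth_coherent}. Using the adjunction $\Hom_{\Qcoh X}(E,i_{p,*}V)\iso\Hom_\bk(E_p,V)$, an object of $\cR$ is exactly the data of a quasi-coherent sheaf $E$, vector spaces $E_p^i=X_{(p,i)}$ and maps $E_p\to E_p^{w_p-1}\to\dots\to E_p^1$ at each $p\in S$; a parabolic vector bundle $\bfE$ embeds as the representation with $E$ locally free and every arrow surjective, the filtration being recovered as $E_{p,i}=\Ker(E_p\onto E_p^i)$. Morphisms in $\cR$ are tautologically the filtration-preserving ones.

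Plugging $\bfE,\bfF\in\cR$ into Theorem~\ref{th:long Phi}, the vertex $\Hom$-summands give $\Hom(E,F)\oplus\bop_{p\in S,\,1\le i<w_p}\Hom(E_p^i,F_p^i)$, the chain-arrow $\Hom$-summands give $\bop_{p\in S,\,1\le i<w_p-1}\Hom(E_p^{i+1},F_p^i)$, and each connecting arrow $a_p$ contributes $\Hom_{\Qcoh X}(E,i_{p,*}F_p^{w_p-1})=\Hom(E_p,F_p^{w_p-1})$, which is precisely the missing $i=w_p-1$ summand $\Hom(E_p^{w_p},F_p^{w_p-1})$. At the $\Ext^1$ level, $\Ext^1_{\Vect}=0$ kills the vertex and chain-arrow terms, while $\Ext^1_{\Qcoh X}(E,i_{p,*}F_p^{w_p-1})$ vanishes because $E$ is locally free: it equals $H^1(X,E^\vee\otimes i_{p,*}F_p^{w_p-1})$, the cohomology of a skyscraper sheaf on a curve, which is zero. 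Since $\Qcoh X$ is hereditary the higher $\Ext$'s vanish as well, so the long exact sequence of Theorem~\ref{th:long Phi} truncates to exactly the claimed sequence.

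The main point requiring care is the clean identification of $\cR$ with the abelian category of parabolic coherent sheaves; once this is in place, the rest reduces to assembling the $\Hom$-terms and the simple skyscraper $\Ext^1$ vanishing above.
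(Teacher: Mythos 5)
Your proposal is correct and follows the same strategy as the paper: realize parabolic bundles as representations of a chain-shaped quiver, apply Theorem~\ref{th:long Phi}, and kill the tail of the long exact sequence using the vanishing of $\Ext^1$ from a locally free sheaf into a skyscraper. All the bookkeeping checks out: the vertex terms, the identification of the connecting-arrow term with the $i=w_p-1$ summand via $E_p^{w_p}=E_p$, and the fact that morphisms in $\cR$ are exactly the filtration-preserving ones (because the quotient maps $E_p\onto E_p^i$ are epimorphisms, the components $f_{(p,i)}$ are forced by $f_0$).

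The one genuine difference is the choice of diagram. The paper puts $\Vect^\op$ at the chain vertices and encodes the data contravariantly via dual spaces $V_i=(E_p^{n+1-i})^*$, with $\Phi_a\col V\mto V^*\ts\bk_p$, translating back to quotients only at the end; you instead work covariantly with $\Vect$ and the exact pushforward $i_{p,*}$, keeping the quotients $E_p^i$ themselves. Your version is arguably cleaner: it avoids the final dualization step, it handles all of $S$ at once via a star-shaped quiver (the paper reduces to $|S|=1$), and --- more substantively --- $\Vect$ and $\Qcoh X$ are honestly Grothendieck categories, whereas $\Vect^\op$ is not, so your setup matches the literal hypotheses of Theorem~\ref{th:long Phi} more directly than the paper's. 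Your $\Ext^1$ vanishing ($H^1$ of a sheaf supported on points) replaces the paper's Serre duality argument and needs no smoothness; the appeal to heredity for $\Ext^{\ge2}$ is superfluous for the stated sequence, since exactness at $\Ext^1(E,F)$ only needs the next arrow term $\Ext^1_{\Qcoh X}(E,i_{p,*}F_p^{w_p-1})$ to vanish.
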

\begin{proof}
We can interpret parabolic vector bundles as quiver representations as follows.
For simplicity let us assume that $S=\set p$ and let $n=w_p-1$.
Let $\fm_p\sbs\cO_X$ be the maximal ideal of the point $p\in X$ and let $\bk_p=\cO_X/\fm_p$ be the corresponding skyscraper sheaf.
For any coherent sheaf $E\in\Coh X$, define its fiber
$E_p=E/\fm_pE\iso E\ts_{\cO_X}\bk_p$.
For any $V\in\Vect$ we have
$$\Hom_{\Vect}(V,(E_p)^*)
\iso \Hom_{\Vect}(E_p,V^*)
\iso\Hom_{\Coh X}(E,V^*\ts \bk_p).
$$ 
Therefore the functor 
$$\Psi_a:\Coh X\to\Vect^\op,\qquad
E\mto (E_p)^*\iso\Hom_{\Coh X}(E,\bk_p)$$
is left adjoint to the exact functor $\Phi_a:\Vect^\op\to\Coh X$, $V\mto V^*\ts \bk_p$.
Consider the quiver~$Q$
$$0\xto{a}1\xto{a_1}2\to\dots\to n$$
and the $Q$-diagram
with $\Psi_0=\Coh X$, $\Psi_i=\Vect^\op$ for $i\ge1$,
$\Psi_a$ defined as above and $\Psi_{a_i}=\Id$ for $i\ge1$.
It has a right adjoint $Q^\op$-diagram $\Phi$ with $\Phi_a$ defined as above.
A representation is given by $E\in\Coh X$, $V_i\in\Vect$ for $i\ge1$ and a chain of morphisms in \Vect
$$(E_p)^*=V_0\lto V_1\lto V_2\lto\dots\lto V_n.$$
Given a parabolic vector bundle $\bfE=(E,E_*)$, we have a chain of epimorphisms $E_p=E_p^{n+1}\to E_p^n\to\dots\to E_p^1\to E_p^0=0$ which induces a chain of monomorphisms
$$(E_p)^*=V_0\lto V_1\lto V_2\lto\dots\lto V_n,
\qquad V_i=(E_p^{n+1-i})^*.$$
We consider this data as an object of $\Rep(\Psi)$.
In this way we embed the category of parabolic vector bundles into the abelian category $\Rep(\Psi)\iso\dRep(\Phi)$.

Given two representations $\bfE=(E,V_*)$ and $\bfF=(F,W_*)$ with locally free $E\in\Coh X$, we obtain from 
Theorem \ref{th:long Phi} an exact sequence
\begin{multline*}
0\to\Hom(\bfE,\bfF)\to\Hom(E,F)\oplus\bop_{i=1}^n\Hom(W_i,V_i)\to
\bop_{i=1}^{n}\Hom(W_{i},V_{i-1})\\
\to\Ext^1(\bfE,\bfF)
\to\Ext^1(E,F)\to
\Ext^1(E,\Phi_aW_1)=0
\end{multline*}
The last equality follows from Serre duality
$\Ext^1(E,\bk_p)\iso\Hom(\bk_p,E)^*$ and the assumption that $E$ is locally free.
If $\bfE$ and $\bfF$ correspond to parabolic bundles $(E,E_*)$ and $(F,F_*)$ respectively, we have
$\Hom(W_i,V_j)\iso\Hom(E_p^{n+1-j},F_p^{n+1-i})$, hence the statement of the theorem.
\end{proof}

\begin{notes}
In \cite[p.9]{garcia-prada_betti}, given two parabolic bundles $\bfE,\bfF$, one considers $M_p\sbs\Hom(E_p,F_p)$ consisting of parabolic maps and an exact sequence of sheaves
$$0\to\lHom(\bfE,\bfF)\to\lHom(E,F)\to\Hom(E_p,F_p)/M_p\to0$$
This suggests that there there is an exact sequence
$$0\to\bop_{i=1}^n\Hom(W_i,V_i)
\to\bop_{i=1}^n\Hom(W_i,V_{i-1})
\to\Hom(E_p,F_p)/M_p\to0
$$
\end{notes}

\subsection{Nested sheaves}
Let $X$ be an algebraic surface and $p\in X$ be a point with the maximal ideal $\fm_p\sbs\cO_X$ and the skyscraper sheaf $\bk_p=\cO_X/\fm_p$.
Consider the category of pairs $(F,F')$ of coherent sheaves such that $\fm_p F\sbs F'\sbs F$.
The moduli spaces of such pairs are studied in \cite[\S4]{nakajima_perverse} under the name of moduli spaces of perverse coherent sheaves on a blow-up.
Such pairs are determined by $F$ together with an epimorphism $F_p=F/\fm_pF\onto F/F'$.

We can represent such pairs as quiver representations as follows.
Consider the quiver $Q=[0\xto a1]$ and the $Q$-diagram with $\Psi_0=\Coh X$, $\Psi_1=\Vect^\op$ and 
\eq{\Psi_a:\Coh X\to\Vect^\op,\qquad
F\mto (F_p)^*\iso\Hom_{\Coh X}(F,\bk_p).}
It has an exact right adjoint $Q^\op$-diagram $\Phi$ with $\Phi_a:\Vect^\op\to\Coh X$, $V\mto V^*\ts \bk_p$.
A representation is given by a triple $(F,V,s)$, where $F\in\Coh X$, $V\in\Vect$ and 
$s\in
\Hom_{\Vect}(V,(F_p)^*)\iso\Hom_{\Coh X}(F,V^*\ts\bk_p)$.
Given a pair $(F,F')$ as above, we consider $V^*=F/F'$
and $s$ corresponding to $F\to F/F'$.
In this way we embed the category of pairs $(F,F')$ as above into $\Rep(\Psi)$.
Given two representations $\bfE=(E,V,s)$ and $\bfF=(F,W,s')$ with a torsion free sheaf $E$ we obtain from Theorem \ref{th:long Phi}, an exact sequence 
\begin{multline}
0\to\Hom(\bfE,\bfF)\to\Hom(E,F)\oplus\Hom(W,V)\to
(E_p)^*\ts W^*\\
\to\Ext^1(\bfE,\bfF)
\to\Ext^1(E,F)\to
\Ext^1(E,\bk_p)\ts W^*
\to\Ext^2(\bfE,\bfF)\to 
\Ext^2(E,F)\to0,
\end{multline}
where we used the fact that $\Ext^2(E,\bk_p)\iso\Hom(\bk_p,E)^*=0$.

\subsection{Framed sheaves}
\label{sec:framed}
Let $X$ be an algebraic variety and let $P\in\Coh X$.
The mapping cylinder of the left exact functor $\Phi_a:\Coh X\to\Vect$, $E\mto\Hom(P,E)$, is a category with object consisting of triples $(E,V,s)$, where $E\in\Coh X$, $V\in\Vect$ and $s:V\to\Hom(P,E)$.
It can be interpreted as the category of representations of the quiver $Q=[0\xto a1]$ and the $Q^\op$-diagram $\Phi_0=\Vect$, $\Phi_1=\Coh X$ and $\Phi_a:\Phi_1\to\Phi_0$ defined as above.
This diagram has an exact left adjoint $Q$-diagram $\Psi$ with $\Psi_a:\Vect\to\Coh X$, $V\mto V\ts P$.
Given two representations $\bfE=(E,V,s)$ and $\bfF=(F,W,s')$,
we obtain from Theorem \ref{th:long Psi}
a long exact sequence
\begin{multline}
0
\to\Hom(\bfE,\bfF)\to\Hom(E,F)\oplus\Hom(V,W)
\to\Hom(V\ts P,F)\\
\to\Ext^1(\bfE,\bfF)\to\Ext^1(E,F)\to
\Ext^1(V\ts P,F)\to\dots
\end{multline}

\begin{remark}
More generally, consider a (small) abelian category $\cA$ and a left exact functor $\Phi_a:\cA\to\Vect$.
Its mapping cylinder can be identified with the category of quiver representations in the same way as above, where
a representation $\bfE=(E,V,s)$ consist of $E\in\cA$, $V\in\Vect$ and $s:V\to\Phi_aE$.
If $\Phi_a$ is an exact functor, we obtain for two representations $\bfE=(E,V,s)$ and $\bfF=(F,W,s')$ an exact sequence from Theorem \ref{th:long Phi}
\begin{multline}
0
\to\Hom(\bfE,\bfF)\to\Hom(E,F)\oplus\Hom(V,W)
\to\Hom(V,\Phi_aF)\\
\to\Ext^1(\bfE,\bfF)\to\Ext^1(E,F)\to
0
\end{multline}
and $\Ext^i(\bfE,\bfF)\iso\Ext^i(E,F)$ for $i\ge2$.
This means that the homological dimension of the mapping cylinder coincides with the homological dimension of $\cA$.
\end{remark}

\providecommand{\bysame}{\leavevmode\hbox to3em{\hrulefill}\thinspace}
\providecommand{\href}[2]{#2}


\begin{thebibliography}{10}

\bibitem{crawley-boevey_lectures}
William Crawley-Boevey, \emph{{L}ectures on represenations of quivers}, 1992.

\bibitem{dyckerhoff_highera}
Tobias Dyckerhoff and Mikhail Kapranov, \emph{{H}igher {S}egal spaces {I}},
  2012, \href{http://arxiv.org/abs/1212.3563}{{\ttfamily arXiv:1212.3563}}.

\bibitem{garcia-prada_betti}
O.~Garc{\'\i}a-Prada, P.~B. Gothen, and V.~Mu{\~n}oz, \emph{{B}etti numbers of
  the moduli space of rank 3 parabolic {H}iggs bundles}, 2004,
  \href{http://arxiv.org/abs/math/0411242}{{\ttfamily arXiv:math/0411242}}.

\bibitem{garcia-prada_motives}
Oscar Garc{\'{\i}}a-Prada, Jochen Heinloth, and Alexander Schmitt, \emph{{O}n
  the motives of moduli of chains and {H}iggs bundles}, J. Eur. Math. Soc.
  (JEMS) \textbf{16} (2014), no.~12, 2617--2668,
  \href{http://arxiv.org/abs/1104.5558}{{\ttfamily arXiv:1104.5558}}.

\bibitem{geigle_class}
Werner Geigle and Helmut Lenzing, \emph{{A} class of weighted projective curves
  arising in representation theory of finite-dimensional algebras},
  Singularities, representation of algebras, and vector bundles ({L}ambrecht,
  1985), Lecture Notes in Math., vol. 1273, Springer, Berlin, 1987,
  pp.~265--297.

\bibitem{gothen_homological}
Peter~B. Gothen and Alastair~D. King, \emph{{H}omological algebra of twisted
  quiver bundles}, J. London Math. Soc. (2) \textbf{71} (2005), no.~1, 85--99,
  \href{http://arxiv.org/abs/math/0202033}{{\ttfamily arXiv:math/0202033}}.

\bibitem{SGA1}
Alexander Grothendieck, \emph{{R}ev\^etements \'etales et groupe fondamental
  ({SGA} 1)}, S\'{e}minaire de G\'{e}om\'{e}trie Alg\'{e}brique, vol. 1960/61,
  Institut des Hautes \'{E}tudes Scientifiques, Paris, 1963,
  \href{http://arxiv.org/abs/math/0206203}{{\ttfamily arXiv:math/0206203}}.

\bibitem{hartshorne_residues}
Robin Hartshorne, \emph{{R}esidues and duality}, Lecture notes of a seminar on
  the work of A. Grothendieck, given at Harvard 1963/64. With an appendix by P.
  Deligne. Lecture Notes in Mathematics, No. 20, Springer-Verlag, Berlin, 1966.

\bibitem{heinloth_coherent}
Jochen Heinloth, \emph{{C}oherent sheaves with parabolic structure and
  construction of {H}ecke eigensheaves for some ramified local systems}, Ann.
  Inst. Fourier (Grenoble) \textbf{54} (2004), no.~7, 2235--2325 (2005),
  \href{http://arxiv.org/abs/math/0302210}{{\ttfamily arXiv:math/0302210}}.

\bibitem{kashiwara_categories}
Masaki Kashiwara and Pierre Schapira, \emph{{C}ategories and sheaves},
  Grundlehren der Mathematischen Wissenschaften [Fundamental Principles of
  Mathematical Sciences], vol. 332, Springer-Verlag, Berlin, 2006.

\bibitem{keller_chain}
Bernhard Keller, \emph{{C}hain complexes and stable categories}, Manuscripta
  Math. \textbf{67} (1990), no.~4, 379--417.

\bibitem{krause_stable}
Henning Krause, \emph{{T}he stable derived category of a {N}oetherian scheme},
  Compos. Math. \textbf{141} (2005), no.~5, 1128--1162,
  \href{http://arxiv.org/abs/math/0403526}{{\ttfamily arXiv:math/0403526}}.

\bibitem{lin_spherical}
Jyun-Ao Lin, \emph{{S}pherical {H}all algebras of a weighted projective curve},
  2014, \href{http://arxiv.org/abs/1410.0896}{{\ttfamily arXiv:1410.0896}}.

\bibitem{mozgovoy_intersection}
Jan Manschot and Sergey Mozgovoy, \emph{{I}ntersection cohomology of moduli
  spaces of sheaves on surfaces}, Selecta Math. (N.S.) \textbf{24} (2018),
  no.~5, 3889--3926, \href{http://arxiv.org/abs/1512.04076}{{\ttfamily
  arXiv:1512.04076}}.

\bibitem{mellit_poincarea}
Anton Mellit, \emph{{P}oincare polynomials of character varieties, {M}acdonald
  polynomials and affine {S}pringer fibers}, 2017,
  \href{http://arxiv.org/abs/1710.04513}{{\ttfamily arXiv:1710.04513}}.

\bibitem{mozgovoy_wall-crossing}
Sergey Mozgovoy, \emph{{W}all-crossing formulas for framed objects}, Quart. J.
  Math. \textbf{64} (2013), 489--513,
  \href{http://arxiv.org/abs/1104.4335}{{\ttfamily arXiv:1104.4335}}.

\bibitem{nakajima_perverse}
Hiraku Nakajima and K{\=o}ta Yoshioka, \emph{{P}erverse coherent sheaves on
  blow-up. {II}. {W}all-crossing and {B}etti numbers formula}, J. Algebraic
  Geom. \textbf{20} (2011), no.~1, 47--100,
  \href{http://arxiv.org/abs/0806.0463}{{\ttfamily arXiv:0806.0463}}.

\bibitem{yokogawa_infinitesimal}
K{\^o}ji Yokogawa, \emph{{I}nfinitesimal deformation of parabolic {H}iggs
  sheaves}, Internat. J. Math. \textbf{6} (1995), no.~1, 125--148.

\end{thebibliography}

\end{document}